\newtheorem{theorem}{Th\'eor\`eme}[section]
\newtheorem{lemma}[theorem]{Lemma}
\newtheorem{proposition}[theorem]{Proposition}
\theoremstyle{definition}
\newtheorem{definition}[theorem]{Definition}
\theoremstyle{remark}
\theoremstyle{conjecture}
\newcommand \F{\mathcal{F}}
\numberwithin{equation}{section}
\begin{document}

\title{Bernstein processes described by forward and backward heat equations} 

\author{{\bf Mohamad Houda \rm}}

\date{01/08/2013}

\setcounter{section}{0}

\begin{abstract}
We present some results on Bernstein processes which are Brownian diffusions that appear in Euclidean Quantum Mechanics: We express the distributions of these processes with the help of those of 
Bessel processes. We then determine two solutions of the dual equation of the heat equation with potential. This continues the calculation of Lescot for the solution with an Ornstein-Uhlenbek process \cite{Lescot2013}.

\vspace{2mm}
Keywords: Bessel processes, Bernstein processes, interest rate models.

\bigskip

\end{abstract}

\maketitle
\markright{Bernstein processes described by forward and backward heat equations} 
\setcounter{section}{0}
\numberwithin{equation}{section}

\section{Introduction}

\textit{Bernstein processes} or reciprocal diffusions combine two dynamic behaviors : forward and backward. These processes are useful tools of stochastic quantum mechanics, and also find applications in various other fields.

We give in this paper an explicit expression for the distribution density of a special Bernstein process. This process is similar to the Cox-Ingersoll-Ross process of financial mathematics. Indeed, more recently 
it has appeared that each \textit{one-factor affine interest rate model} (in the sense of Leblanc-Scaillet) could be described using such a Bernstein process.

The groundbreaking idea of replacing the complex Schr\"odinger equation by forward and backward heat equations in duality goes back to Schr\"odinger.

Professor Paul Lescot has computed two solutions of the dual equation by using the Gaussian character of Ornstein-Uhlenbek process and Brownian motion. In this paper, this goal will be reached by using another class of diffusion which is that of \textit{Bessel processes}. 

Let us first give some definitions and recall some preliminary results.

\begin{definition}
A \emph{Bernstein process} $z$ is a process satisfying a stochastic differential equation of the form 
$$
dz(t)=\theta dw(t)+\tilde{B}(t , z(t))dt \,\, 
$$
relatively to the canonical filtration of the Brownian motion $w$ and the dual stochastic differential equation  
$$
dz(t)=\theta dw_{*}(t)+\tilde{B_{*}}(t , z(t))dt 
$$
relatively to the canonical decreasing filtration of another Brownian motion $w_{*}$ which is independent of $w$, where 
$$
\tilde{B}\mbox{:}\equiv \theta^{2}\displaystyle\frac{\displaystyle\frac{\partial \eta}{\partial q}}{\eta}\,,
$$
$$
 \tilde{B_{*}}\mbox{:}\equiv -\theta^{2}\displaystyle\frac{\displaystyle\frac{\partial \eta_{*}}{\partial 
q}}{\eta_{*}}\,
$$ 
and, for each given $t > 0$, the law of $z(t)$ is $ \eta(t,q)\eta_{*}(t,q)dq.$The function $\eta$ is assumed to be an everywhere positive solution to
$$
\theta^{2}\displaystyle\frac{\partial \eta}{\partial t}=-\displaystyle\frac{\theta^{4}}{2}
\displaystyle\frac{\partial^{2}\eta}{\partial q^{2}}+V\eta   \,\,\,\, (\mathcal C_{1}^{(V)})\,.
$$

Similarly, $\eta_{*}$ is assumed to be everywhere positive and a solution to

$$
-\theta^{2}\displaystyle\frac{\partial \eta_{*}}{\partial t}=-\displaystyle\frac{\theta^{4}}{2}
\displaystyle\frac{\partial^{2}\eta_{*}}{\partial q^{2}}+V\eta_{*}       \,\,\,\, (\mathcal C_{2}^{(V)})\,.
$$
\end{definition}
%%%%%%%%%%%%%%%%%%
\begin{definition}\cite[$p.454$ $(2.1)$]{Dieudonne1968}
The \emph{Bessel function} $J_\lambda$ with index $\lambda \in \mathbf C$ is defined by 
$$
J_\lambda(z) = (\frac{z}{2})^\lambda\,\sum_{n=0}^{\infty}\frac{(-z^2)^n}{2^{2n}\, n!\,\Gamma(n+\lambda+1)}\,.
$$ 
This function satisfies the {Bessel} equation with parameter $\lambda \in \mathbf C$ 
\begin{eqnarray}
z^2\ddot{\omega} + z\dot{\omega} + (z^2 - \lambda^2)\,\omega = 0
\end{eqnarray}
\end{definition}
%%%%%%%%%%%%%%%%%%
\begin{definition}
The \emph{Bessel modified function} $I_\nu$ with index $\nu \in \mathbf R$ is defined by
$$
I_\nu(z) = i^{-\nu}J_\nu(iz)\,.
$$
This function satisfies the linear differential equation of second order
\begin{eqnarray}
z^2\ddot{I_\nu}(z) + z\dot{I_\nu}(z) -(z^2 + \nu^2)I_\nu(z) = 0\,.
\end{eqnarray}
\end{definition}
Indeed, we have $I_\nu(z) = i^{-\nu}J_\nu(iz)\,,\dot{I_\nu}(z) = i^{-\nu}\,i\,\dot{J_\nu}(iz)$ et $\ddot{I_\nu}(z) = i^{-\nu}\,i^{2} \ddot{J_\nu}(iz)\,.
$
Equation $(1.1)$ gives 
$$
(iz)^2\,i^{\nu}\,i^{-2} \ddot{I_\nu}(z) + iz\,i^{\nu}\,i^{-1}\dot{I_\nu}(z) + (-z^2-\nu^2)\,i^{\nu} I_\nu(z) = 0
$$
$$
z^2\ddot{I_\nu}(z) + z\dot{I_\nu}(z) -(z^2 + \nu^2)I_\nu(z) = 0\,,
$$
which yields $(1.2)$.

We refer to $[1]$ for details on Bessel functions.
%%%%%%%%%%%%%%
\begin{definition}
For all  $\delta \geq 0$ and $x_0 \geq 0$, the unique solution of 

$$
Y_t = x_0 + \delta t + 2\int_0^t \sqrt{|Y_s|}dw_s.
$$ 
starting from ${x_0}$, is called \emph{squared Bessel process of dimension $\delta$}. This process will be denoted by BESQ$_{x_0}^\delta$.
\end{definition}
We refer to $[2]$ and $[5]$ for a survery on Bessel processes.
%%%%%%%%%%%%%%%
\begin{proposition} \cite[$p.441$ $(1.4)$]{RevuzYor1993} If $\bold{x_0 = 0}$, then the density $q_t^\delta$ of the law of $Y_t$ is given by

\begin{eqnarray}
q_t^\delta(0,y) = (2t)^{-\frac{\delta}{2}} \Gamma(\frac{\delta}{2})^{-1} y^{\frac{\delta}{2}-1} \exp(-\frac{y}{2t})\,,\,t > 0\,.
\end{eqnarray}
If $\bold{x_0 > 0}$, then the density $q_t^\delta$ of the law of $Y_t$ is given by
\begin{eqnarray}
q_t^\delta(x_0,y) = \frac{1}{2t}(\frac{y}{x_0})^{\frac{\nu}{2}}\,\exp(-\frac{x_0+y}{2t})\,I_\nu(\frac{\sqrt{x_0y}}{t})\,, \,t>0 
\end{eqnarray}
where $\nu \mbox{:}= \frac{\delta}{2}-1$.
\end{proposition}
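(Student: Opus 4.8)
The plan is to compute the Laplace transform of $Y_t$ explicitly and then to invert it, handling the two cases $x_0=0$ and $x_0>0$ together. First I would fix $\lambda>0$ and $t>0$ and seek, on $[0,t]$, an exponential-affine martingale $M_s=\exp(-a(s)Y_s-b(s))$. Since the defining equation gives $dY_s=\delta\,ds+2\sqrt{Y_s}\,dw_s$ (the solution remains non-negative, so $|Y_s|=Y_s$) with $d\langle Y\rangle_s=4Y_s\,ds$, It\^o's formula shows that the drift of $M_s$ vanishes exactly when $a'=2a^2$ and $b'=-\delta a$. Imposing the terminal data $a(t)=\lambda$, $b(t)=0$ and integrating this Riccati pair backwards yields $a(s)=\lambda/(1+2\lambda(t-s))$ and $b(s)=\tfrac{\delta}{2}\log(1+2\lambda(t-s))$, so that the martingale identity $E_{x_0}[M_t]=M_0$ gives
\[
E_{x_0}\bigl[e^{-\lambda Y_t}\bigr]=(1+2\lambda t)^{-\delta/2}\exp\Bigl(-\frac{\lambda x_0}{1+2\lambda t}\Bigr).
\]

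For $x_0=0$ this reduces to $(1+2\lambda t)^{-\delta/2}$, which I recognize as the Laplace transform of the Gamma law of shape $\delta/2$ and scale $2t$; its density is precisely the expression $(1.4)$, so the first formula follows immediately.

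For $x_0>0$ the main work is to invert the product of the Gamma factor with the exponential factor. My approach is to expand the latter as a Poisson-type series: using $\frac{\lambda x_0}{1+2\lambda t}=\frac{x_0}{2t}-\frac{x_0}{2t}\cdot\frac{1}{1+2\lambda t}$ I obtain
\[
E_{x_0}\bigl[e^{-\lambda Y_t}\bigr]=e^{-x_0/(2t)}\sum_{j=0}^{\infty}\frac{1}{j!}\Bigl(\frac{x_0}{2t}\Bigr)^{j}(1+2\lambda t)^{-(\delta/2+j)}.
\]
Each summand $(1+2\lambda t)^{-(\delta/2+j)}$ is the Laplace transform of the Gamma density of shape $\delta/2+j$ and scale $2t$; after justifying the interchange of the sum with Laplace inversion (by positivity and monotone convergence), the density of $Y_t$ becomes a single power series in $y$. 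Factoring out $e^{-(x_0+y)/(2t)}\,y^{\delta/2-1}(2t)^{-\delta/2}$ leaves $\sum_{j\ge0}\frac{1}{j!\,\Gamma(\delta/2+j)}\bigl(\frac{x_0y}{4t^2}\bigr)^{j}$, which by the series defining $I_\nu$ (read off from that of $J_\lambda$ through $I_\nu(z)=i^{-\nu}J_\nu(iz)$, with $\nu=\delta/2-1$) equals $(\sqrt{x_0y}/(2t))^{-\nu}\,I_\nu(\sqrt{x_0y}/t)$. Substituting this and collapsing the powers of $2t$, $x_0$ and $y$ produces the prefactor $\frac{1}{2t}(y/x_0)^{\nu/2}$, hence formula $(1.5)$.

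The steps I expect to be most delicate are the backward integration of the Riccati system, where the constants must be chosen so that $a(0)$ and $b(0)$ reproduce the stated transform, and, above all, the exponent bookkeeping in the case $x_0>0$: the interchange of summation and inversion has to be justified, and the collection of $y^{\delta/2-1}$, $(2t)^{-\delta/2}$ and $(\sqrt{x_0y}/(2t))^{-\nu}$ into the clean factor $\frac{1}{2t}(y/x_0)^{\nu/2}$ is where an arithmetic slip is easiest to make. The remaining manipulations are routine recognitions of Gamma transforms.
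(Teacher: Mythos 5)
Your argument is correct, but there is nothing in the paper to compare it with: the proposition is quoted verbatim from Revuz--Yor \cite{RevuzYor1993} and the paper supplies no proof. Your derivation is the standard one, in two steps. For the Laplace transform, the exponential supermartingale ansatz $M_s=\exp(-a(s)Y_s-b(s))$ with the Riccati pair $a'=2a^2$, $b'=-\delta a$ and terminal data $a(t)=\lambda$, $b(t)=0$ is sound; since $a\ge 0$, $b\ge 0$ and $Y\ge 0$, $M$ is a local martingale bounded by $1$, hence a true martingale, and $E_{x_0}[e^{-\lambda Y_t}]=(1+2\lambda t)^{-\delta/2}\exp\bigl(-\lambda x_0/(1+2\lambda t)\bigr)$ follows. (Revuz--Yor obtain the same transform from the additivity $Q^{\delta}_{x_0}=Q^{\delta}_{0}*Q^{0}_{x_0}$ of squared Bessel laws, which yields the more general Laplace functional but is not needed here.) Your inversion for $x_0>0$ --- a Poisson mixture with parameter $x_0/(2t)$ of Gamma laws of shapes $\delta/2+j$ and scale $2t$, with the resulting series recognized as $\bigl(\sqrt{x_0y}/(2t)\bigr)^{-\nu}I_\nu\bigl(\sqrt{x_0y}/t\bigr)$ via the series defining $I_\nu$ --- is exactly the Gamma--Poisson representation of the noncentral chi-square density, and the exponent bookkeeping you worry about does close up: $y^{\nu}(2t)^{-\nu-1}(x_0y)^{-\nu/2}(2t)^{\nu}=\tfrac{1}{2t}(y/x_0)^{\nu/2}$. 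The interchange you flag is most cleanly handled in the opposite direction: integrate the candidate density against $e^{-\lambda y}$ term by term (Tonelli, all terms positive) and invoke uniqueness of Laplace transforms. The only caveat worth recording is that both formulas implicitly assume $\delta>0$: for $\delta=0$ the $j=0$ term of your series is the transform of a point mass at the origin, so the law has an atom there and the stated $q_t^{\delta}$ describes only its absolutely continuous part; in the paper's application $\delta=4\tilde\phi/\alpha$ is taken positive, so this is harmless.
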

%%%%%%%%%%%%%%
\section{ON AFFINE INTEREST RATE MODELS}
Let $(\Omega, \F, (\F_t)_{t\geq 0}, Q)$ be a filtered probability space, and let $w(t)_{t\geq 0}$ be an $(\F_t)_{t\geq 0} $ standard Brownian motion.

A {\em one-factor affine interest rate model} is characterized by the instantaneous 
rate $r(t)$, satisfying the stochastic differential equation
$$ d r(t) = \sqrt{\alpha r(t) + \beta} \,
 d w(t) +(\phi - \lambda r (t))\,dt 
$$
under the risk-neutral probability $Q$ $($\cite{LeblancScaillet1998}, $p.351)$.

Assuming $\alpha \neq 0$, let us set, with the notations of Lescot \cite{Lescot2013}, 
$$
\tilde{\phi} = \phi + \frac{\lambda\beta}{\alpha}\,,
$$

$$
\delta = \frac{4 \tilde{\phi}}{\alpha}\,,
$$

$$
A = \frac{\alpha^4}{128}(\delta-1)(\delta-3)\,,
$$
and
$$
B = \frac{\lambda^2}{8}.$$ 
%%%%%%%%%%%%%%%%
\begin{proposition}$($Lescot \cite{Lescot2013}$)$. Let $X_t$ be defined by 
$$ X_t=\alpha r(t)+\beta\,.
$$ 
The process $X$ is called the {\em Cox-Ingersoll-Ross process}. Define $Z(t)$ by
$$
Z(t) = \sqrt{\alpha r(t)+\beta}\,.
$$
Then $Z$ is a Bernstein process with
$$
\theta = \frac{\alpha}{2}
$$
and the potential
$$
V(t,q) = \frac{A}{q^2} + Bq^2 \,.$$
\end{proposition}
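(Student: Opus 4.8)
The plan is to verify, in turn, the three ingredients required by the Definition of a Bernstein process: the forward stochastic differential equation together with an everywhere positive solution $\eta$ of $(\mathcal{C}_1^{(V)})$; the dual backward equation together with an everywhere positive solution $\eta_*$ of $(\mathcal{C}_2^{(V)})$; and the factorization of the one-time law of $Z(t)$ as $\eta\eta_*\,dq$. The computational engine throughout is It\^o's formula applied to $Z(t)=\sqrt{X_t}=f(X_t)$ with $f(x)=\sqrt{x}$.

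First I would rewrite the dynamics of $X_t=\alpha r(t)+\beta$. Since $r(t)=(X_t-\beta)/\alpha$, the equation for $r$ gives $dX_t=\alpha\sqrt{X_t}\,dw(t)+(\alpha\tilde\phi-\lambda X_t)\,dt$, hence $d\langle X\rangle_t=\alpha^2 X_t\,dt$. With $f'(x)=\tfrac{1}{2\sqrt{x}}$ and $f''(x)=-\tfrac14 x^{-3/2}$, the martingale part of $dZ(t)$ becomes $\tfrac{\alpha}{2}\,dw(t)$, which identifies $\theta=\alpha/2$, while the drift collapses (after using $\tilde\phi=\alpha\delta/4$ coming from $\delta=4\tilde\phi/\alpha$) to
\begin{equation}
\tilde B(q)=\frac{\theta^2(\delta-1)}{2q}-\frac{\lambda q}{2}.
\end{equation}
I would then recover $\eta$ by integrating $\tilde B=\theta^2\,\partial_q\eta/\eta=\theta^2\,\partial_q\ln\eta$. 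Because $\tilde B$ depends on $q$ only, $\eta$ must be separable, $\eta(t,q)=g(t)\,q^{(\delta-1)/2}\exp(-\lambda q^2/\alpha^2)$. Substituting into $(\mathcal{C}_1^{(V)})$ and dividing by $\eta$ exhibits $V$ as a $q$-dependent part coming from $h''/h$, where $h(q)=q^{(\delta-1)/2}e^{-\lambda q^2/\alpha^2}$, plus a $t$-dependent part $\theta^2 g'/g$. Matching the $q^{-2}$ and $q^{2}$ coefficients reproduces exactly $A=\tfrac{\alpha^4}{128}(\delta-1)(\delta-3)$ and $B=\tfrac{\lambda^2}{8}$, while the absence of a constant term in $V$ forces $g'/g=\lambda\delta/4$, i.e. $g$ exponential; this simultaneously confirms the stated potential and shows $\eta>0$ solves $(\mathcal{C}_1^{(V)})$.

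It remains to produce $\eta_*$ and the backward equation. I would obtain the one-time density $\rho(t,q)$ of $Z(t)$ from Proposition 1.5: the Cox--Ingersoll--Ross process may be written as a scaled, deterministically time-changed squared Bessel process $X_t=e^{-\lambda t}Y_{s(t)}$ with $s(t)=\tfrac{\alpha^2}{4\lambda}(e^{\lambda t}-1)$, dimension $\delta$, and $Y_0=X_0$. The BESQ density $q_s^\delta$ together with the change of variables $x=q^2$ then yields $\rho(t,q)=2q\,e^{\lambda t}\,q^\delta_{s(t)}(X_0,e^{\lambda t}q^2)$. I would set $\eta_*:=\rho/\eta$, check it is everywhere positive on $q>0$, and verify by direct computation that it solves $(\mathcal{C}_2^{(V)})$ with $\tilde B_*=-\theta^2\,\partial_q\eta_*/\eta_*$, the backward drift being furnished by the time reversal of the diffusion. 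By construction $\eta\eta_*=\rho$ is the law of $Z(t)$, which closes the verification.

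The main obstacle I anticipate is precisely this last step, the dual (backward) equation. The forward drift forces $\eta$ to be separable, whereas $\rho$ (and hence $\eta_*=\rho/\eta$) is genuinely non-separable away from the invariant regime, so the implication ``$\eta$ solves $(\mathcal{C}_1^{(V)})$ and $\rho$ solves the associated Fokker--Planck equation $\Rightarrow \rho/\eta$ solves $(\mathcal{C}_2^{(V)})$'' must be checked carefully, in tandem with the time reversal producing the independent Brownian motion $w_*$ and the drift $\tilde B_*$. I would also record the regularity and positivity hypothesis, a Feller-type condition on $\delta$, ensuring $Z(t)>0$ almost surely so that all logarithmic derivatives are well defined on $q>0$.
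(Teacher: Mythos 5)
Your plan is sound and your computations check out: the It\^o step correctly gives $dX_t=\alpha\sqrt{X_t}\,dw(t)+(\alpha\tilde\phi-\lambda X_t)\,dt$, hence $\theta=\alpha/2$ and $\tilde B(q)=\frac{\theta^2(\delta-1)}{2q}-\frac{\lambda q}{2}$, and integrating $\tilde B=\theta^2\partial_q\ln\eta$ and matching coefficients in $(\mathcal C_1^{(V)})$ does reproduce $A=\frac{\alpha^4}{128}(\delta-1)(\delta-3)$, $B=\frac{\lambda^2}{8}$ and $\eta(t,q)=e^{\lambda\delta t/4-\lambda q^2/\alpha^2}q^{(\delta-1)/2}$. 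Be aware, however, that the paper itself gives no proof of this proposition: it is quoted from Lescot, and the function $\eta$ is simply asserted to solve $(\mathcal C_1^{(V)})$. So the forward half of your argument is material the paper omits entirely, and it is the natural reconstruction of Lescot's computation. The backward half of your plan is exactly what the paper's Propositions 2.3--2.7 carry out: the density $\rho_t(q)=2qe^{\lambda t}q^{\delta}_{s(t)}(x_0,e^{\lambda t}q^2)$ via the time-changed squared Bessel representation, then $\eta_*:=\rho/\eta$, then verification of $(\mathcal C_2^{(V)})$. Where you genuinely diverge is in how that verification could be done: you invoke the general duality ``$\rho$ solves the Fokker--Planck equation and $\eta$ solves $(\mathcal C_1^{(V)})$ $\Rightarrow$ $\rho/\eta$ solves $(\mathcal C_2^{(V)})$,'' which is a correct and short Leibniz-rule computation and explains \emph{why} the dual equation holds; the paper instead differentiates the explicit closed form of $\eta_*$ (separately for $x_0=0$ and $x_0>0$, using the Bessel ODE $(1.2)$ in the latter case) and checks $(\mathcal C_2^{(V)})$ by brute force. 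Your route is more conceptual and handles both cases at once, at the price of needing to justify that $\rho$ is a classical solution of the Fokker--Planck equation on $q>0$; the paper's route needs no such justification but is a long explicit calculation. Your remark about a Feller-type condition on $\delta$ guaranteeing positivity is a genuine hypothesis the paper leaves implicit and is worth recording.
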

%%%%%%%%%%%%%%%%%%%%%
\begin{lemma}\cite[$p.314$ $(2))$]{JaeschkeYor2003}. Let $\bold{X_0 = x_0} \,,$ then
$$
X_t = e^{-\lambda t}Y(s)\,,
$$
where $s=\frac{\alpha^2(e^{\lambda t}-1)}{4\lambda}$ and $Y$ is BESQ$_{x_0}^\delta$.
\end{lemma}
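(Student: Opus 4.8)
The plan is to verify the identity by an explicit deterministic time change combined with Itô's formula, and then to invoke uniqueness for the defining SDE. First I would write out the equation governing $X_t=\alpha r(t)+\beta$. Since $dr(t)=\sqrt{\alpha r(t)+\beta}\,dw(t)+(\phi-\lambda r(t))\,dt$ and $r(t)=(X_t-\beta)/\alpha$, substituting gives
\[
dX_t=\alpha\sqrt{X_t}\,dw(t)+\bigl(\alpha\phi+\lambda\beta-\lambda X_t\bigr)\,dt .
\]
Using $\tilde{\phi}=\phi+\lambda\beta/\alpha$ and $\delta=4\tilde{\phi}/\alpha$ this becomes
\[
dX_t=\alpha\sqrt{X_t}\,dw(t)+\Bigl(\tfrac{\alpha^{2}\delta}{4}-\lambda X_t\Bigr)\,dt,\qquad X_0=x_0 .
\]
This is the target SDE; note that its diffusion coefficient $\alpha\sqrt{x}$ is only H\"older-$\tfrac12$, so strong existence is classical but uniqueness will require the Yamada--Watanabe criterion rather than a Lipschitz argument.

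Next I would introduce the candidate process $\widetilde{X}_t:=e^{-\lambda t}Y(s_t)$, where $Y$ is BESQ$_{x_0}^{\delta}$ and $s_t=\frac{\alpha^{2}(e^{\lambda t}-1)}{4\lambda}$, and compute $d\widetilde{X}_t$. Writing $f(t)=e^{-\lambda t}$ one has $f'(t)=-\lambda f(t)$ and $s_t'=\frac{\alpha^{2}}{4}e^{\lambda t}$. From the defining equation $dY_s=\delta\,ds+2\sqrt{Y_s}\,d\tilde{w}_s$, the deterministic change of clock $s=s_t$ turns the martingale part into $2\int_0^t\sqrt{Y(s_u)}\,\sqrt{s_u'}\,dB_u$ for a Brownian motion $B$ (the standard time-change identity for a continuous local martingale, obtained by matching quadratic variations). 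Hence
\[
d\widetilde{X}_t=\bigl(f'(t)Y(s_t)+f(t)\,\delta\,s_t'\bigr)\,dt+2f(t)\sqrt{Y(s_t)}\,\sqrt{s_t'}\,dB_t .
\]
Substituting $Y(s_t)=e^{\lambda t}\widetilde{X}_t$ together with the explicit $f$ and $s_t'$, I expect the drift to collapse to $\frac{\alpha^{2}\delta}{4}-\lambda\widetilde{X}_t$ and the prefactor $2f(t)\sqrt{s_t'}$ to recombine into $\alpha$, so that the diffusion coefficient simplifies to $\alpha\sqrt{\widetilde{X}_t}$. Thus $\widetilde{X}$ solves exactly the same SDE as $X$, with the same initial value $\widetilde{X}_0=Y(0)=x_0$.

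The conclusion then follows from uniqueness for this equation: with coefficients $b(x)=\frac{\alpha^{2}\delta}{4}-\lambda x$ (affine, hence Lipschitz) and $\sigma(x)=\alpha\sqrt{x}$ (H\"older-$\tfrac12$), the Yamada--Watanabe theorem gives pathwise uniqueness, whence $X$ and $\widetilde{X}$ have the same law, i.e. $X_t$ is distributed as $e^{-\lambda t}Y(s_t)$. I expect the main obstacle to be the bookkeeping around the time change: one must correctly identify the driving Brownian motion produced by the deterministic reparametrisation and check that the square-root diffusion coefficient behaves consistently under \emph{both} the scaling $f(t)$ and the change of clock $s_t$, in particular that $\sqrt{f(t)}\,\sqrt{s_t'}$ collapses to the constant $\tfrac{\alpha}{2}$. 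The remaining reduction of the drift is routine once the substitution $Y(s_t)=e^{\lambda t}\widetilde{X}_t$ is made.
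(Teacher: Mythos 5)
Your argument is correct, but note that the paper itself offers no proof of this lemma: it is quoted verbatim from G\"oing-Jaeschke and Yor, so there is no internal proof to compare against. Your route --- derive the CIR equation $dX_t=\alpha\sqrt{X_t}\,dw(t)+(\tfrac{\alpha^{2}\delta}{4}-\lambda X_t)\,dt$, show by the deterministic time change that $e^{-\lambda t}Y(s_t)$ solves the same equation from the same initial point, and conclude by Yamada--Watanabe --- is the standard one, and your computations check out: the drift collapses because $e^{-\lambda t}\delta s_t'=\tfrac{\alpha^{2}\delta}{4}$, and the diffusion coefficient because $2\sqrt{f(t)}\sqrt{s_t'}=\alpha$ combines with $\sqrt{f(t)Y(s_t)}=\sqrt{\widetilde{X}_t}$. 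Two small remarks. First, what your argument delivers is equality \emph{in law} of the processes, whereas the lemma is written as a pathwise identity; for everything the paper does with it (computing the density of $X_t$, hence of $Z_t$) equality in law is all that is needed, but if you want the almost-sure statement you should run the construction in the other direction: define $Y(s):=e^{\lambda t(s)}X(t(s))$ with $t(s)$ the inverse clock and verify by It\^o that $Y$ is a BESQ$_{x_0}^{\delta}$ --- this avoids the uniqueness step altogether. Second, $\sqrt{s_t'}=\tfrac{|\alpha|}{2}e^{\lambda t/2}$, so for $\alpha<0$ you recover $|\alpha|\sqrt{\widetilde{X}_t}$ as the diffusion coefficient; this is harmless since $-w$ is again a Brownian motion, but it is worth saying.
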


Our main result is the following : 

We determine two solutions of the dual equation $(\mathcal C_{2}^{(V)})$, when the potential $V$ is written as 
$$
V(t,q) = \frac{A}{q^2} + Bq^2\,.
$$
Firstly we search $\rho_t(q)$ the density of the law of $Z_t$. Here, two cases arise : $\bold{x_0 = 0}$ or $\bold{x_0 > 0}$.
\begin{proposition}
If $\bold{x_0 = 0}$, then the density $\rho_t(q)$ of the law of $Z_t$ is written as
\begin{eqnarray*}
\rho_t(q) = \alpha^{-\delta} 2^{\frac{\delta}{2}+1} \lambda^{\frac{\delta}{2}}(e^{\lambda t}-1)^{-\frac{\delta}{2}}\Gamma(\frac{\delta}{2})^{-1} q^{\delta-1} \exp(\frac{\lambda \delta}{2} t)\exp(\frac{-2\lambda e^{\lambda t}q^2}{\alpha^2(e^{\lambda t}-1)})\displaystyle\mathbb{1}_{\{q > 0\}}\,.
\end{eqnarray*}
\end{proposition}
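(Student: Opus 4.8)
The plan is to obtain $\rho_t$ by pushing the known squared-Bessel density $(1.3)$ forward through the two transformations that link $Y$ and $Z_t$. Since we are in the case $x_0 = 0$, Lemma $2.3$ gives $X_t = e^{-\lambda t} Y(s)$ with $s = \frac{\alpha^2(e^{\lambda t}-1)}{4\lambda}$ and $Y$ a BESQ$_0^\delta$ process, so its marginal at time $s$ has the explicit density $q_s^\delta(0,\cdot)$ furnished by $(1.3)$. Because $Z_t = \sqrt{\alpha r(t)+\beta} = \sqrt{X_t} = e^{-\lambda t/2}\sqrt{Y(s)}$, the law of $Z_t$ is the image of the law of $Y(s)$ under the smooth increasing bijection $y \mapsto q = e^{-\lambda t/2}\sqrt{y}$ of $(0,\infty)$ onto $(0,\infty)$.

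First I would invert this map to get $y = e^{\lambda t} q^2$ and compute the Jacobian $\frac{dy}{dq} = 2 e^{\lambda t} q$. The change-of-variables formula for densities then yields, for $q > 0$,
$$\rho_t(q) = q_s^\delta\!\left(0,\, e^{\lambda t} q^2\right)\, \frac{dy}{dq} = 2 e^{\lambda t} q \cdot q_s^\delta\!\left(0,\, e^{\lambda t} q^2\right),$$
while $\rho_t(q) = 0$ for $q \le 0$ since $Z_t \ge 0$; this accounts for the factor $\mathbb{1}_{\{q>0\}}$.

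Next I would substitute $(1.3)$ evaluated at $y = e^{\lambda t} q^2$ and at time $s$, namely
$$q_s^\delta\!\left(0,\, e^{\lambda t} q^2\right) = (2s)^{-\frac{\delta}{2}}\,\Gamma\!\left(\tfrac{\delta}{2}\right)^{-1}\left(e^{\lambda t}q^2\right)^{\frac{\delta}{2}-1}\exp\!\left(-\frac{e^{\lambda t}q^2}{2s}\right),$$
and replace $2s = \frac{\alpha^2(e^{\lambda t}-1)}{2\lambda}$. Expanding $(2s)^{-\delta/2} = 2^{\delta/2}\lambda^{\delta/2}\alpha^{-\delta}(e^{\lambda t}-1)^{-\delta/2}$ and $(e^{\lambda t}q^2)^{\delta/2-1} = e^{\lambda t(\delta/2-1)} q^{\delta-2}$ reduces the product to the announced expression.

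The computation is entirely elementary, so I expect the only real work to be the bookkeeping in this final simplification: combining the two powers of $2$ into $2^{\delta/2+1}$, merging the exponential factors $e^{\lambda t(\delta/2-1)}$ and $e^{\lambda t}$ (the latter from the Jacobian) into $\exp(\frac{\lambda\delta}{2}t)$, and merging the powers of $q$ (the $q^{\delta-2}$ from the density with the extra $q$ from the Jacobian) into $q^{\delta-1}$, while carrying the Gaussian factor $\exp(-\frac{2\lambda e^{\lambda t}q^2}{\alpha^2(e^{\lambda t}-1)})$ through unchanged. The one hypothesis to state carefully is $t > 0$, which guarantees $s > 0$ so that $(1.3)$ applies and every power of $(e^{\lambda t}-1)$ is well defined.
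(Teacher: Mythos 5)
Your proposal is correct and follows essentially the same route as the paper: both push the BESQ$_0^\delta$ density $(1.3)$ at time $s$ forward under the map linking $Y(s)$ to $Z_t$, the only difference being that the paper does this in two successive changes of variables (first $x=e^{-\lambda t}y$ to get the density of $X_t$, then $q=\sqrt{x}$) while you compose them into the single map $y\mapsto e^{-\lambda t/2}\sqrt{y}$. The bookkeeping you describe matches the paper's final expression exactly.
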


\begin{proof}
For fixed $t$ and for all $g$ bounded continuous,
\begin{eqnarray*}
E\left(g(X_t)\right) &=& E\left[g\left(\exp(-\lambda t) Y(s) \right) \right]\\ 
&\underset{(1.3)}=& \int_{0}^\infty g(\exp(-\lambda t)y) (2s)^{-\frac{\delta}{2}} \Gamma(\frac{\delta}{2})^{-1} y^{\frac{\delta}{2}-1} \exp(-\frac{y}{2s}) dy\\
&=& (2s)^{-\frac{\delta}{2}} \Gamma(\frac{\delta}{2})^{-1} \int_{0}^\infty g(\exp(-\lambda t) y) y^{\frac{\delta}{2}-1} \exp(-\frac{y}{2s}) dy\,.
\end{eqnarray*}
By the change of variable $x=\exp(-\lambda t)y$, we get
\begin{eqnarray*}
E\left(g(X_t)\right) &=&(2s)^{-\frac{\delta}{2}}\Gamma(\frac{\delta}{2})^{-1} \int_{0}^\infty g(x) [x \exp(\lambda t)]^{\frac{\delta}{2}-1} \exp(-\frac{e^{\lambda t}x}{2s}) \exp(\lambda t) dx\\
&=& \int_{0}^\infty g(x) (2s)^{-\frac{\delta}{2}}\Gamma(\frac{\delta}{2})^{-1} x^{{\frac{\delta}{2}-1}} \exp(\frac{\lambda \delta}{2} t)\exp(-\frac{e^{\lambda t}x}{2s}) dx.
\end{eqnarray*}
Then the density $\ell_{X_t}$ of the law of $X_t$ is given by
$$
\ell_{X_t}(x) = \alpha^{-\delta} (2\lambda)^{\frac{\delta}{2}}(e^{\lambda t}-1)^{-\frac{\delta}{2}}\Gamma(\frac{\delta}{2})^{-1} x^{{\frac{\delta}{2}-1}} \exp(\frac{\lambda \delta}{2} t)\exp(\frac{-2\lambda e^{\lambda t}x}{\alpha^2(e^{\lambda t}-1)}) \mathbb{1}_{\{x > 0\}}\,.
$$
For fixed $t$ and for all bounded continuous $\varphi$, we have
\begin{eqnarray*}
E[\varphi(Z_t)] &=& E[\varphi(\sqrt{X_t})]\\
&=& \int_0^\infty \varphi(\sqrt{x})\ell_{X_t}(x) dx\,.
\end{eqnarray*}
By the change of variable $\,q=\sqrt{x}\,,$ we get
\begin{eqnarray*}
E[\varphi(Z_t)] &=& \int_0^\infty \varphi(q) \alpha^{-\delta} (2\lambda)^{\frac{\delta}{2}}(e^{\lambda t}-1)^{-\frac{\delta}{2}}\Gamma(\frac{\delta}{2})^{-1} q^{\delta-2} \exp(\frac{\lambda \delta}{2} t)\exp(\frac{-2\lambda e^{\lambda t}q^2}{\alpha^2(e^{\lambda t}-1)})\,2 q dq\\
&=&\int_0^\infty \varphi(q) \alpha^{-\delta} 2^{\frac{\delta}{2}+1} \lambda^{\frac{\delta}{2}}(e^{\lambda t}-1)^{-\frac{\delta}{2}}\Gamma(\frac{\delta}{2})^{-1} q^{\delta-1} \exp(\frac{\lambda \delta}{2} t)\exp(\frac{-2\lambda e^{\lambda t}q^2}{\alpha^2(e^{\lambda t}-1)})\,dq\,.
\end{eqnarray*}
This proves the result.
\end{proof}
\bigskip
Let $\eta(t,q)$ be defined by
$$
\eta(t,q) = \exp(\frac{\lambda\delta t}{4}-\frac{\lambda q^2}{\alpha^2})\,q^{\frac{\delta-1}{2}}.
$$
The function $\eta$ is a solution of 
\begin{eqnarray*} 
\theta^2\frac{\partial \eta}{\partial t} = -\frac{\theta^4}{2}\frac{\partial^2 \eta}{\partial q^2} + V\eta \,\,\,\, (\mathcal C_{1}^{(V)})\,,
\end{eqnarray*}
with
$$
V(t,q) = \frac{A}{q^2} + Bq^2\,.
$$
\begin{proposition}
The function\ $\eta_*(t,q)$ defined by 
\begin{eqnarray*}
\eta_*(t,q) &=& \frac{\rho_t(q)}{\eta(t,q)}\\ 
&=& \frac{2^{\frac{\delta}{2}+1}}{\alpha^\delta} \frac{\lambda^{\frac{\delta}{2}}}{\Gamma(\frac{\delta}{2})} (e^{\lambda t}-1)^{-\frac{\delta}{2}} q^{\frac{\delta-1}{2}} \exp(\displaystyle \frac{\lambda\delta t}{4} -\frac{\lambda q^2}{\alpha^2\tanh(\frac{\lambda t}{2})})\,,
\end{eqnarray*}
satisfies the dual equation
\begin{eqnarray*} 
-\theta^2\frac{\partial \eta_*}{\partial t} = -\frac{\theta^4}{2}\frac{\partial^2 \eta_*}{\partial q^2} + V\eta_* \,\,\,\, (\mathcal C_{2}^{(V)})\,.
\end{eqnarray*}
\end{proposition}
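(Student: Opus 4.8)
The plan is to verify $(\mathcal C_{2}^{(V)})$ by direct substitution, exploiting the fact that $\eta_*$ has the separable form
$$
\eta_*(t,q) = a(t)\,q^{\frac{\delta-1}{2}}\,\exp\bigl(-g(t)\,q^2\bigr),
$$
where I set $m=\frac{\delta-1}{2}$, $g(t) = \dfrac{\lambda}{\alpha^2\tanh(\frac{\lambda t}{2})}$ and $a(t) = \dfrac{2^{\frac{\delta}{2}+1}}{\alpha^\delta\,\Gamma(\frac{\delta}{2})}\,\lambda^{\frac{\delta}{2}}(e^{\lambda t}-1)^{-\frac{\delta}{2}}\exp(\frac{\lambda\delta t}{4})$; recall also $\theta^2=\frac{\alpha^2}{4}$ and $\theta^4=\frac{\alpha^4}{16}$. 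The multiplicative constant in $a$ is irrelevant to the PDE, so only the time-dependence of $a$ through $a'/a$ will matter.

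First I would compute the logarithmic derivatives, which is the cleanest bookkeeping device here. One has $\partial_q\eta_*/\eta_* = \frac{m}{q}-2g q$, hence
$$
\frac{\partial_q^2\eta_*}{\eta_*} = \Bigl(\frac{m}{q}-2gq\Bigr)^2 + \Bigl(-\frac{m}{q^2}-2g\Bigr) = \frac{m(m-1)}{q^2} + 4g^2q^2 - (4m+2)g,
$$
while $\partial_t\eta_*/\eta_* = \frac{a'}{a}-g'q^2$. Substituting into $(\mathcal C_{2}^{(V)})$ and dividing by $\eta_*>0$ turns the PDE into the polynomial identity in $q^{-2},1,q^2$
$$
-\theta^2\frac{a'}{a}+\theta^2 g'q^2 = -\frac{\theta^4}{2}\Bigl(\frac{m(m-1)}{q^2}+4g^2q^2-(4m+2)g\Bigr)+\frac{A}{q^2}+Bq^2.
$$

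Matching the three powers of $q$ reduces the claim to three scalar identities, each checked separately. The $q^{-2}$ term gives the purely algebraic relation $A=\frac{\theta^4}{2}m(m-1)=\frac{\alpha^4}{128}(\delta-1)(\delta-3)$, exactly the value of $A$ fixed in Section 2; the same condition appears when checking that $\eta$ solves $(\mathcal C_{1}^{(V)})$, so it also serves as a consistency check on the construction. The $q^2$ term gives the Riccati equation $\theta^2 g' = -2\theta^4 g^2 + B$, which I would verify by computing $g'=\frac{\lambda}{\alpha^2}\frac{d}{dt}\coth(\frac{\lambda t}{2})=-\frac{\lambda^2}{2\alpha^2}\,\frac{1}{\sinh^2(\frac{\lambda t}{2})}$ and invoking $\coth^2-1=1/\sinh^2$ together with $B=\frac{\lambda^2}{8}$. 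The $q^0$ term gives $\frac{a'}{a}=-\theta^2\delta\,g=-\frac{\lambda\delta}{4}\coth(\frac{\lambda t}{2})$, using $4m+2=2\delta$.

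The main obstacle — really the only nontrivial bookkeeping — is this last, constant-term identity, since it requires differentiating the full prefactor $a(t)$. I would compute $\frac{a'}{a}=-\frac{\delta}{2}\frac{\lambda e^{\lambda t}}{e^{\lambda t}-1}+\frac{\lambda\delta}{4}=-\frac{\lambda\delta}{4}\frac{e^{\lambda t}+1}{e^{\lambda t}-1}$ and then use the identity $\coth(\frac{\lambda t}{2})=\frac{e^{\lambda t}+1}{e^{\lambda t}-1}$ to match it against $-\frac{\lambda\delta}{4}\coth(\frac{\lambda t}{2})$. Once the three identities hold, the polynomial equality holds for every $q>0$, so $\eta_*$ solves $(\mathcal C_{2}^{(V)})$, which is the assertion. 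A more structural alternative would be to use that $\rho_t=\eta\,\eta_*$ is the density of the diffusion $Z$ and hence solves the forward Kolmogorov equation associated with the drift $\tilde{B}=\theta^2\partial_q\eta/\eta$; combined with $\eta$ solving $(\mathcal C_{1}^{(V)})$, this forces $\eta_*=\rho_t/\eta$ to solve $(\mathcal C_{2}^{(V)})$ by an algebraic manipulation. The direct coefficient-matching above is, however, shorter and entirely self-contained.
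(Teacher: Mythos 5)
Your proof is correct and follows essentially the same route as the paper: direct substitution of $\eta_*$ into $(\mathcal C_{2}^{(V)})$ and verification term by term. Your bookkeeping via logarithmic derivatives and matching the coefficients of $q^{-2}$, $q^{0}$, $q^{2}$ (which pins down exactly the given values of $A$ and $B$ and the Riccati identity for $g$) is tidier than the paper's raw computation of $\partial_t\eta_*$, $\partial_q^2\eta_*$ and $V\eta_*$, and it actually carries out the final cancellation that the paper dismisses as ``straightforward to see.''
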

\bigskip
\begin{proof}
Let
$$
C =\alpha^{-\delta}\, 2^{\frac{\delta}{2}+1} \lambda^{\frac{\delta}{2}}\,\Gamma(\frac{\delta}{2})^{-1}$$
and 
$$
C_1 = C (e^{\lambda t}-1)^{-\frac{\delta}{2}}\,.
$$
We have
\begin{eqnarray*}
\frac{\partial\eta_*}{\partial t} &=& \Bigg(-\frac{\delta}{2}\,(\lambda e^{\lambda t})+
\frac{\lambda\delta}{4} - \frac{\lambda q^2}{\alpha^2}(-\frac{2\lambda e^{\lambda t}}{(e^{\lambda t}-1)^2})
\Bigg)
C(e^{\lambda t}-1)^{-\frac{\delta}{2}-1}\,q^{\frac{\delta-1}{2}}\exp(\displaystyle \frac{\lambda\delta t}{4} -\frac{\lambda q^2}{\alpha^2\tanh(\frac{\lambda t}{2})})\,,
\end{eqnarray*}
%%%%%%%%%%%%%%%%%%%%%%%%%%
\begin{eqnarray*}
\frac{\partial\eta_*}{\partial q} = \left((\frac{\delta-1}{2})\,q^{\frac{\delta-3}{2}} + q^{\frac{\delta-1}{2}}(-\frac{2\lambda q}{\alpha^2\tanh(\frac{\lambda t}{2})}) \right)C_1\,\exp(\displaystyle \frac{\lambda\delta t}{4} -\frac{\lambda q^2}{\alpha^2\tanh(\frac{\lambda t}{2})})\,,
\end{eqnarray*}
%%%%%%%%%%%%%%%%%%%%%%%%%%
\begin{eqnarray*}
\frac{\partial^2\eta_*}{\partial q^2}&=&\Bigg(
(\frac{\delta-1}{2})(\frac{\delta-3}{2})\,q^{\frac{\delta-5}{2}}
+(\frac{\delta-1}{2})\,q^{\frac{\delta-3}{2}}(-\frac{2\lambda q}{\alpha^2\tanh(\frac{\lambda t}{2})})
+ q^{\frac{\delta-1}{2}}\,(-\frac{2\lambda}{\alpha^2\tanh(\frac{\lambda t}{2})})\\ 
&+&(\frac{\delta-1}{2})\,q^{\frac{\delta-3}{2}}(-\frac{2\lambda q}{\alpha^2\tanh(\frac{\lambda t}{2})})+ q^{\frac{\delta-1}{2}}\,(-\frac{2\lambda q}{\alpha^2\tanh(\frac{\lambda t}{2})})^2
\Bigg)C_1\,
\exp(\displaystyle \frac{\lambda\delta t}{4} -\frac{\lambda q^2}{\alpha^2\tanh(\frac{\lambda t}{2})})\,,
\end{eqnarray*}
and
\begin{eqnarray*}
V(t,q)\,\eta_*(t,q) &=&(\frac{A}{q^2} + Bq^2)\,\eta_*(t,q)\\
&=&(\frac{\alpha^4}{2^7}(\delta-1)(\delta-3)\,q^{-2} + \frac{\lambda^2}{2^3}q^2)\,\eta_*(t,q)\\
&=&\left(\alpha^{-\delta + 4}\,2^{\frac{\delta}{2}-6}\,\lambda^{\frac{\delta}{2}}(\delta-1)(\delta-3)q^{\frac{\delta-5}{2}} + \alpha^{-\delta}\,2^{\frac{\delta}{2}-2}\,\lambda^{\frac{\delta}{2} + 2}\,q^{\frac{\delta + 3}{2}}\right){\Gamma(\frac{\delta}{2})}^{-1} (e^{\lambda t}-1)^{-\frac{\delta}{2}}\\
&\times&\exp(\displaystyle \frac{\lambda\delta t}{4} -\frac{\lambda q^2}{\alpha^2\tanh(\frac{\lambda t}{2})})\,.
\end{eqnarray*}
%%%%%%%%%%%%%%%%%%
It is then straightforward to see that, for all $(t,q)\in \mathbf R{^*_+} \times \mathbf R{^*_+}$,
\begin{eqnarray*} 
-\theta^2\frac{\partial \eta_*}{\partial t} = -\frac{\theta^4}{2}\frac{\partial^2 \eta_*}{\partial q^2} + V\eta_* \,\,\,\, (\mathcal C_{2}^{(V)})\,.
\end{eqnarray*}
\end{proof}
\medskip
\begin{proposition}

If $\bold{x_0 > 0}$, then the density $\rho_t(q)$ of the law $Z_t$ is given by 
\begin{eqnarray*}
\rho_t(q) = \frac{4\lambda}{\alpha^2 z_0^\nu}\,\frac{e^{\lambda t(\frac{\delta}{4} + \frac{1}{2})}}{(e^{\lambda t}-1)}\,I_\nu(\frac{4\lambda z_0 e^{\frac{\lambda t}{2}}q}{\alpha^2(e^{\lambda t}-1)})\,q^{\frac{\delta}{2}}\,\exp(-\frac{2\lambda}{\alpha^2}(\frac{z_0^2 + e^{\lambda t}q^2}{(e^{\lambda t}-1)}))\displaystyle \mathbb{1}_{\{q > 0\}}\,.
\end{eqnarray*}
\end{proposition}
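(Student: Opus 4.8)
The plan is to mirror exactly the argument used in the case $\mathbf{x_0 = 0}$, but starting from the density $(1.4)$ of the squared Bessel process rather than $(1.3)$. By the Lemma we have $X_t = e^{-\lambda t} Y(s)$ with $s = \frac{\alpha^2(e^{\lambda t}-1)}{4\lambda}$ and $Y$ a $\mathrm{BESQ}_{x_0}^\delta$ process. Throughout I write $z_0 := \sqrt{x_0}$, which is consistent with $Z_0 = \sqrt{X_0}$; thus $x_0 = z_0^2$ and $\sqrt{x_0} = z_0$ are exactly the quantities that surface in the stated formula.

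First I would compute, for fixed $t$ and every bounded continuous $g$,
\[
E\bigl[g(X_t)\bigr] = E\bigl[g(e^{-\lambda t} Y(s))\bigr] = \int_0^\infty g(e^{-\lambda t} y)\, q_s^\delta(x_0, y)\, dy,
\]
and substitute $(1.4)$ for $q_s^\delta(x_0, y)$. The change of variable $x = e^{-\lambda t} y$, that is $y = e^{\lambda t} x$ with $dy = e^{\lambda t}\, dx$, then rewrites the integral as $\int_0^\infty g(x)\, \ell_{X_t}(x)\, dx$ and identifies the density $\ell_{X_t}$ of $X_t$. The substitution acts in three places: the factor $(y/x_0)^{\nu/2}$, the exponential $\exp\bigl(-(x_0+y)/(2s)\bigr)$, and the Bessel argument $\sqrt{x_0 y}/s$, each obtained by replacing $y$ with $e^{\lambda t} x$, and it contributes an overall Jacobian factor $e^{\lambda t}$.

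Next, since $Z_t = \sqrt{X_t}$, I would write $E[\varphi(Z_t)] = \int_0^\infty \varphi(\sqrt{x})\, \ell_{X_t}(x)\, dx$ for every bounded continuous $\varphi$ and apply the change of variable $q = \sqrt{x}$, that is $x = q^2$ with $dx = 2q\, dq$, exactly as in the case $\mathbf{x_0 = 0}$. Reading off the coefficient of $\varphi(q)\, dq$ then yields $\rho_t(q)$, the support $q > 0$ producing the indicator $\mathbb{1}_{\{q > 0\}}$.

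The rest is purely algebraic. Inserting $s = \alpha^2(e^{\lambda t}-1)/(4\lambda)$ gives $1/s = 4\lambda/(\alpha^2(e^{\lambda t}-1))$ and $1/(2s) = 2\lambda/(\alpha^2(e^{\lambda t}-1))$, which generate the leading constant $4\lambda/\alpha^2$, the factor $1/(e^{\lambda t}-1)$, the Bessel argument $4\lambda z_0 e^{\lambda t/2} q/(\alpha^2(e^{\lambda t}-1))$, and the exponential $-\tfrac{2\lambda}{\alpha^2}(z_0^2 + e^{\lambda t} q^2)/(e^{\lambda t}-1)$. Collecting the powers then uses $\nu + 1 = \delta/2$ (so that $q \cdot q^\nu = q^{\delta/2}$), the identity $e^{\lambda t}\cdot e^{\lambda t\nu/2} = e^{\lambda t(\delta/4 + 1/2)}$, and $x_0^{\nu/2} = z_0^\nu$. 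I expect no conceptual difficulty here; the only genuine risk is bookkeeping, namely keeping straight the accumulated powers of $e^{\lambda t}$, of $q$, and of the constants $\alpha$, $\lambda$, and $z_0$. A reassuring consistency check is that sending $x_0 \to 0$ and using the small-argument behaviour $I_\nu(z) \sim (z/2)^\nu/\Gamma(\nu+1)$ recovers precisely the formula of the case $\mathbf{x_0 = 0}$.
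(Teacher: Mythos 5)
Your proposal is correct and essentially identical to the paper's proof: both start from the representation $X_t=e^{-\lambda t}Y(s)$ and the density $(1.4)$, and read off $\rho_t$ after a change of variables. The only cosmetic difference is that you pass through the intermediate density of $X_t$ in two substitutions ($x=e^{-\lambda t}y$, then $q=\sqrt{x}$), whereas the paper performs the single combined substitution $z=\sqrt{e^{-\lambda t}y}$; the bookkeeping you describe (the exponent identities $\nu+1=\tfrac{\delta}{2}$, $e^{\lambda t}e^{\lambda t\nu/2}=e^{\lambda t(\frac{\delta}{4}+\frac{1}{2})}$, $x_0^{\nu/2}=z_0^{\nu}$) is exactly what is needed and checks out.
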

%$I_\nu $ est la fonction de \textit{Bessel} modifi\'ee d\'efinie dans le pr\'eliminaire.
%%%%%%%%%%%%%%%%%%%%%%%%%%%%%%
\begin{proof}
We have
$$
X_t = e^{-\lambda t}Y(s)\,,\quad s = \frac{\alpha^2}{4\lambda}(e^{\lambda t}-1)
$$
and
$$
\left\{
\begin{array}{rl}
Z_t&= \sqrt{X_t}\\
Z_0&= \sqrt{x_0}\,.
\end{array}
\right.
$$
For fixed $t$, for all $g$ bounded continuous, using $ (1.4)$, we get
\begin{eqnarray*}
E(g(Z_t)) &=& E\left(g(\sqrt{e^{-\lambda t}Y(s)})\right)\\
&=& \int_0^\infty g(\sqrt{e^{-\lambda t }y})\,\frac{1}{2s}\,(\frac{y}{x_0})^{\frac{\nu}{2}}\,\exp(-\frac{x_0+y}{2s})\,I_\nu(\frac{\sqrt{x_0y}}{s})dy\,.
\end{eqnarray*}
By the change of variable $z=\sqrt{e^{-\lambda t}y}$, we obtain
\begin{eqnarray*}
E(g(Z_t)) = \int_0^\infty g(z)\,\frac{1}{2s}(\frac{e^{\lambda t}z^2}{z_0^2})^{\frac{\nu}{2}}\,\exp(-\frac{z_0^2+e^{\lambda t}z^2}{2s})\,I_\nu(\frac{z_0ze^{\frac{\lambda t}{2}}}{s})\,2ze^{\lambda t}dz\,.
\end{eqnarray*}
We then find the density $\rho_t(q)$ of the law de $Z_t$ by replacing $s$ and $\nu$ by their value.
\end{proof}
%%%%%%%%%%%%%%%%%%%%%%
\begin{proposition}
The function $\eta_*(t,q)$ defined by
\begin{eqnarray*}
\eta_*(t,q) &=& \frac{\rho_t(q)}{\eta(t,q)}\\
&=& \frac{4\lambda}{\alpha^2 z_0^\nu}\,\frac{e^{\frac{\lambda t}{2}}}{(e^{\lambda t}-1)}\,I_\nu(\frac{4\lambda z_0 e^{\frac{\lambda t}{2}}q}{\alpha^2(e^{\lambda t}-1)})\,q^{\frac{1}{2}}\,\exp(\frac{\lambda q^2}{\alpha^2} - \frac{2\lambda}{\alpha^2}(\frac{z_0^2 + e^{\lambda t}q^2}{(e^{\lambda t}-1)}))
\end{eqnarray*}
satisfies the dual equation $(\mathcal C_{2}^{(V)})$.
\end{proposition}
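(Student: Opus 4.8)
The plan is to verify $(\mathcal C_{2}^{(V)})$ by direct substitution, organizing the computation so that the modified Bessel function is handled through its differential equation $(1.2)$ rather than through its series. First I would simplify the Gaussian factor: a short calculation shows that
$$
\frac{\lambda q^2}{\alpha^2}-\frac{2\lambda}{\alpha^2}\frac{z_0^2+e^{\lambda t}q^2}{e^{\lambda t}-1}=-\frac{\lambda q^2}{\alpha^2}\coth\Big(\frac{\lambda t}{2}\Big)-\frac{2\lambda z_0^2}{\alpha^2(e^{\lambda t}-1)},
$$
so that $\eta_*$ takes the form $\eta_*(t,q)=P(t)\,q^{1/2}I_\nu\!\big(c(t)q\big)\,e^{G(t,q)}$, where $c(t)=\frac{4\lambda z_0 e^{\lambda t/2}}{\alpha^2(e^{\lambda t}-1)}$, $P(t)=\frac{4\lambda}{\alpha^2 z_0^{\nu}}\frac{e^{\lambda t/2}}{e^{\lambda t}-1}$, and $G$ is the simplified exponent above. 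The point of writing the argument of $I_\nu$ as $c(t)q$, linear in $q$, is that spatial differentiation merely rescales it.

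The decisive step is to eliminate the second Bessel derivative. Setting $m(t,q)=q^{1/2}I_\nu(c(t)q)$ and using $(1.2)$ to replace $I_\nu''$, the $I_\nu'$ contributions in $\partial_q^2 m$ cancel, leaving the clean identity
$$
\frac{\partial^2 m}{\partial q^2}=\Big[\big(\nu^2-\tfrac14\big)q^{-2}+c(t)^2\Big]\,m .
$$
Together with $\partial_t m=\frac{c'(t)q}{c(t)}\,\partial_q m-\frac{c'(t)}{2c(t)}\,m$, this lets me express each of $\partial_t\eta_*$, $\partial_q^2\eta_*$ and $V\eta_*$ as $e^{G}$ times a linear combination of the two linearly independent functions $m$ and $\partial_q m$ (equivalently $I_\nu(cq)$ and $I_\nu'(cq)$). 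Equation $(\mathcal C_{2}^{(V)})$ then splits into two scalar identities: the matching of the coefficients of $\partial_q m$ and of $m$.

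The $\partial_q m$–identity is routine: it reduces to $\frac{c'(t)q}{c(t)}=\theta^2\,\partial_q G$, and both sides equal $-\frac{\lambda q}{2}\coth(\frac{\lambda t}{2})$. The substantive step is the $m$–identity, which I would check by grouping terms according to their power of $q$. Here the crucial feature is that the singular $q^{-2}$ terms cancel: the term $(\nu^2-\frac14)q^{-2}$ produced by the Bessel equation is annihilated by the singular part $A/q^2$ of the potential, precisely because $\nu=\frac{\delta}{2}-1$ and $A=\frac{\alpha^4}{128}(\delta-1)(\delta-3)$ force $A=\frac{\theta^4}{2}\big(\nu^2-\frac14\big)$. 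This cancellation — the tuning of the Bessel index to the inverse-square part of $V$ — is the main obstacle and the heart of the matter. Once it holds, the $q^2$ terms balance through $B=\frac{\lambda^2}{8}$ and $\theta=\frac{\alpha}{2}$, and the $q$-independent terms balance using $\frac{P'}{P}=\frac{c'}{c}=-\frac{\lambda}{2}\coth(\frac{\lambda t}{2})$, exactly as in the $x_0=0$ case established above.

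Finally I note a structural shortcut that sidesteps all bookkeeping: since $\eta$ solves $(\mathcal C_{1}^{(V)})$ and, by the preceding proposition giving $\rho_t$ for $x_0>0$, the function $\rho_t=\eta\,\eta_*$ is the transition density of the diffusion $Z$ with drift $\theta^2\,\partial_q\eta/\eta$, the forward Kolmogorov equation for $\rho_t$ combined with $(\mathcal C_{1}^{(V)})$ yields $(\mathcal C_{2}^{(V)})$ for $\eta_*=\rho_t/\eta$ after dividing by the positive factor $\eta$. This is the general duality underlying Bernstein processes, and it confirms the explicit computation above.
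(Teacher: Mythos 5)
Your proof is correct and, at bottom, follows the same route as the paper: direct substitution of $\eta_*$ into $(\mathcal C_{2}^{(V)})$, using the modified Bessel equation $(1.2)$ to eliminate $I_\nu''$. The difference is one of organization, and it is a genuine improvement in transparency. The paper computes $\partial_t\eta_*$, $\partial_q\eta_*$, $\partial_q^2\eta_*$ and $V\eta_*$ as raw expressions in $I_\nu$, $I_\nu'$, $I_\nu''$ and then declares the final cancellation ``straightforward''; you instead factor $\eta_*=P(t)\,m(t,q)\,e^{G}$ with $m=q^{1/2}I_\nu(c(t)q)$, reduce everything to the two independent functions $m$ and $\partial_q m$, and isolate exactly where the equation closes. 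Your two key identities check out: $\partial_q^2 m=\big[(\nu^2-\tfrac14)q^{-2}+c^2\big]m$ (the $I_\nu'$ terms do cancel after substituting $(1.2)$), and
\begin{equation*}
\frac{\theta^4}{2}\Big(\nu^2-\frac14\Big)=\frac{\alpha^4}{32}\cdot\frac{(\delta-1)(\delta-3)}{4}=A,
\end{equation*}
which is precisely the tuning that kills the $q^{-2}$ singularity; the $q^2$ balance via $\coth^2-1=\sinh^{-2}$ against $B=\lambda^2/8$, and the $\partial_q m$ identity $c'q/c=\theta^2\partial_q G=-\tfrac{\lambda q}{2}\coth(\tfrac{\lambda t}{2})$, are also correct. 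This makes explicit the content the paper hides in its final sentence, so your write-up is arguably the more complete proof. One caveat on your closing ``structural shortcut'': deducing $(\mathcal C_{2}^{(V)})$ from the forward Kolmogorov equation for $\rho_t=\eta\,\eta_*$ requires knowing that $Z$ is a diffusion with diffusion coefficient $\theta$ and drift $\theta^2\,\partial_q\eta/\eta$ for this specific $\eta$, which is exactly the content of Proposition 2.1 quoted from Lescot without proof; as a self-contained argument it is therefore weaker than your direct computation, though it is a legitimate consistency check and reflects the duality that motivates the whole construction.
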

\begin{proof}
We have
\begin{eqnarray*}
\frac{\partial \eta_*}{\partial t} & = &
\Bigg(
(-\frac{2\lambda^2}{\alpha^2 z_0^\nu}
\, e^{\frac{\lambda t}{2}}
 \frac{(e^{\lambda t} + 1)}
      {(e^{\lambda t}-1)^2} \,
q^{\frac{1}{2}}
+\frac{8\lambda^3}{\alpha^4 z_0^\nu} 
\frac{e^{
\frac{3\lambda t}{2}}}{(e^{\lambda t}-1)^3}\,q^{\frac{5}{2}}
+ \frac{8\lambda^3}{\alpha^4 z_0^\nu}\frac{e^{
\frac{3\lambda t}{2}}}{(e^{\lambda t}-1)^3}\,z_0^2\,q^{\frac{1}{2}})
I_\nu(\frac{4\lambda z_0 e^{\frac{\lambda t}{2}}q}{\alpha^2(e^{\lambda t}-1)})\\
&-& \frac{8 z_0\lambda^3}{\alpha^4 z_0^\nu}\,e^{\lambda t}\frac{e^{\lambda t} + 1}{(e^{\lambda t} - 1)^3}
\,q^{\frac{3}{2}}\,{I_\nu}^{'}(\frac{4\lambda z_0 e^{\frac{\lambda t}{2}}q}{\alpha^2(e^{\lambda t}-1)}) 
\Bigg)
\exp\left(\frac{\lambda q^2}{\alpha^2}-\frac{2\lambda}{\alpha^2}\frac{z_0^2 + e^{\lambda t}q^2}{e^{\lambda t}-1}\right)\,,
\end{eqnarray*}
%%%%%%%%%%%%%%%%%%%%
\begin{eqnarray*}
\frac{\partial \eta_*}{\partial q} &=&
\Bigg(
(\frac{1}{2} q^{-\frac{1}{2}} - \frac{2\lambda}{\alpha^2}\frac{(e^{\lambda t} + 1)}{(e^{\lambda t} -1)}\,q^{\frac{3}{2}})
\,{I_\nu}(\frac{4\lambda z_0 e^{\frac{\lambda t}{2}}q}{\alpha^2(e^{\lambda t}-1)}) + \frac{4\lambda z_0}{\alpha^2}\frac{e^{\frac{\lambda t}{2}}}{(e^{\lambda t} - 1)}\,q^{\frac{1}{2}} {I_\nu}^{'}(\frac{4\lambda z_0 e^{\frac{\lambda t}{2}}q}{\alpha^2(e^{\lambda t}-1)})
\Bigg)\\ 
&\times& \frac{4 \lambda}{\alpha^2 z_0^\nu}\frac{e^{\frac{\lambda t}{2}}}{(e^{\lambda t} - 1)}
\exp\left(\frac{\lambda q^2}{\alpha^2}-\frac{2\lambda}{\alpha^2}\frac{z_0^2 + e^{\lambda t}q^2}{e^{\lambda t}-1}\right)\,,
\end{eqnarray*}
%%%%%%%%%%%%%%%%%%%%%%%
\begin{eqnarray*}
\frac{\partial^2 \eta_*}{\partial q^2} & = &
\Bigg(
(-\frac{1}{4} q^{-\frac{3}{2}} -\frac{4\lambda}{\alpha^2}\frac{(e^{\lambda t}+1)}{(e^{\lambda t}-1)}q^{\frac{1}{2}} 
+\frac{4\lambda^2}{\alpha^4}\frac{(e^{\lambda t}+1)^2}{(e^{\lambda t}-1)^2}\,q^{\frac{5}{2}})
I_\nu(\frac{4\lambda z_0 e^{\frac{\lambda t}{2}}q}{\alpha^2(e^{\lambda t}-1)}) \\
&+&(\frac{4\lambda z_0}{\alpha^2}\frac{e^{\frac{\lambda t}{2}}}{(e^{\lambda t}-1)}\,q^{-\frac{1}{2}} -\frac{16\lambda^2 z_0}{\alpha^4}\,e^{\frac{\lambda t}{2}} \frac{(e^{\lambda t} + 1)}{(e^{\lambda t}-1)^2}\,q^{\frac{3}{2}})I_\nu^{'}(\frac{4\lambda z_0 e^{\frac{\lambda t}{2}}q}{\alpha^2(e^{\lambda t}-1)})\\
&+&\frac{16\lambda^2 z_0^2}{\alpha^4}\frac{e^{\lambda t}}{(e^{\lambda t-1})^2}q^{\frac{1}{2}}\,I_\nu{''}(\frac{4\lambda z_0 e^{\frac{\lambda t}{2}}q}{\alpha^2(e^{\lambda t}-1)})
\Bigg)
\frac{4\lambda}{\alpha^2z_0^\nu}\frac{e^{\frac{\lambda t}{2}}}{(e^{\lambda t}-1)}\exp \left (\frac{\lambda q^2}{\alpha^2}-\frac{2\lambda}{\alpha^2}
\frac{z_0^2 + e^{\lambda t}q^2}{e^{\lambda t}-1} \right)\,,
\end{eqnarray*}
and
\begin{eqnarray*}
V(t,q)\,\eta_*(t,q) &=&(\frac{A}{q^2} + Bq^2)\,\eta_*(t,q)\\
&=&(\frac{\alpha^4}{2^7}(\delta-1)(\delta-3)\,q^{-2} + \frac{\lambda^2}{2^3}q^2)\,\eta_*(t,q)\\
&=&\left(\lambda\frac{\alpha^2}{2^5}\,(\delta-1)(\delta-3)\,q^{-\frac{3}{2}} + \frac{\lambda^3}{2\alpha^2}\,q^{\frac{5}{2}}\right)\,\frac{e^{\frac{\lambda t}{2}}}{z_0^\nu (e^{\lambda t}-1)}\,I_\nu(\frac{4\lambda z_0 e^{\frac{\lambda t}{2}}q}{\alpha^2(e^{\lambda t}-1)})\\
&\times& \exp(\frac{\lambda q^2}{\alpha^2} - \frac{2\lambda}{\alpha^2}(\frac{z_0^2 + e^{\lambda t}q^2}{(e^{\lambda t}-1)}))\,.
\end{eqnarray*}
Taking into account Equation $(1.2)$ of the \textit{Bessel} function $I_\nu$, it is then straightforward to see that for all $(t,q)\in \mathbf R{^*_+}\times \mathbf R{^*_+}$,
\begin{eqnarray*} 
-\theta^2\frac{\partial \eta_*}{\partial t} = -\frac{\theta^4}{2}\frac{\partial^2 \eta_*}{\partial q^2} + V\eta_* \,\,\,\, (\mathcal C_{2}^{(V)})\,.
\end{eqnarray*}
\end{proof}
%%%%%%%%%%%%%%%%%%%%%%%%%%%%
%\bibliographystyle{hplain}
%\bibliography{biblio}

\begin{thebibliography}{1}

\bibitem{Dieudonne1968}
Jean Dieudonn{\'e}.
\newblock {\em Calcul infinit\'esimal}.
\newblock Hermann, Paris, 1968.

\bibitem{JaeschkeYor2003}
Anja G{\"o}ing-Jaeschke and Marc Yor.
\newblock A survey and some generalizations of {B}essel processes.
\newblock {\em Bernoulli}, 9(2):313--349, 2003.

\bibitem{LeblancScaillet1998}
Boris Leblanc and Olivier Scaillet.
\newblock Path dependent options on yields in the affine term structure model.
\newblock {\em Finance Stoch.}, 2(4):349--367, 1998.

\bibitem{Lescot2013}
Paul Lescot.
\newblock {On affine interest rate models}.
\newblock 2010-05.

\bibitem{RevuzYor1993}
Daniel Revuz and Marc Yor.
\newblock {\em Continuous martingales and {B}rownian motion}, volume 293 of
  {\em Grundlehren der Mathematischen Wissenschaften [Fundamental Principles of
  Mathematical Sciences]}.
\newblock Springer-Verlag, Berlin, third edition, 1999.

\end{thebibliography}

\end{document}